\documentclass[11pt,a4paper]{amsart}
\usepackage{amsmath}
\usepackage{amssymb}
\usepackage{amsthm}
\usepackage{amsfonts}
\usepackage{mathrsfs}

\newtheorem{thm}{Theorem}[section]
\newtheorem{lemma}[thm]{Lemma}
\newtheorem{coro}[thm]{Corollary}
\newtheorem{prop}[thm]{Proposition}



\newenvironment{remark}{{\vskip3pt\noindent\bf Remark.}}{\vskip6pt}

\newcommand{\diam}{\operatorname{diam}}

\newcommand{\dist}{\operatorname{dist}}

\newcommand{\loc}{\operatorname{loc}}

\newcommand{\Lip}{\operatorname{Lip}}

\newcommand{\bdry}{\partial}
\newcommand{\R}{\mathbb R}

\newcommand{\N}{\mathbb N}

\newcommand{\Ch}{\mathcal C}

\newcommand{\Ha}{\mathcal H}
\newcommand{\Hh}{\mathscr H}

\newcommand{\sub}{\subset}

\newcommand{\ol}{\overline}

\newcommand{\Char}[1]{\chi_{\lower 1.5pt\hbox{$\scriptscriptstyle #1$}}}
\newcommand{\dom}{{d_{\Omega}}}

\newcommand{\vint}[1]{{\mathchoice 
          {\mathop{\vrule width 6pt height 3 pt depth -2.5pt
                  \kern -8pt \intop}\nolimits}%
          {\mathop{\vrule width 5pt height 3 pt depth -2.6pt
                  \kern -6pt \intop}\nolimits}%
          {\mathop{\vrule width 5pt height 3 pt depth -2.6pt
                  \kern -6pt \intop}\nolimits}%
          {\mathop{\vrule width 5pt height 3 pt depth -2.6pt
                  \kern -6pt \intop}\nolimits}}_{\!\!\!\! #1}}

\begin{document}

\title[Hardy inequalities beyond Lipschitz domains]{Weighted Hardy inequalities\\ beyond Lipschitz domains}
\author{Juha Lehrb\"ack} 

\thanks{
The author was supported in part by
the Academy of Finland, grant no.\ 120972}

\address{Department of Mathematics and Statistics, 
P.O. Box 35 (MaD), 
FIN-40014 University of Jyv\"askyl\"a, 
Finland}
\email{\tt juha.lehrback@jyu.fi}
\subjclass[2000]{Primary 46E35, 26D15}

\begin{abstract}

It is a well-known fact that in a Lipschitz domain $\Omega\sub\R^n$
a $p$-Hardy inequality, with weight $\dist(x,\bdry\Omega)^\beta$,
holds for all $u\in C_0^\infty(\Omega)$ whenever $\beta<p-1$. 
We show that actually the same is true under the sole assumption that the
boundary of the domain satisfies a uniform density condition
with the exponent $\lambda=n-1$. Corresponding results also hold
for smaller exponents, and, in fact, 
our methods work in general metric spaces
satisfying standard structural assumptions. 

\end{abstract}

\maketitle

\section{Introduction}

We say that
an open set $\Omega\sub\R^n$
admits the $(p,\beta)$-Hardy inequality,
for $1<p<\infty$ and $\beta\in\R$, if there
exists a constant $C>0$ such that the inequality
\begin{equation}\label{eq: Hardy}
\int_{\Omega} |u(x)|^p\, \dom(x)^{\beta-p}\,dx
   \leq C\int_{\Omega} |\nabla u(x)|^p\, \dom(x)^\beta \, dx,
\end{equation}
where $\dom(x)=\dist(x,\bdry\Omega)$,
holds for every $u\in C_0^\infty(\Omega)$.
After the one-dimensional considerations
by G.\,H.\,Hardy et.\ al.\ in the early 20th century 
(see \cite[\textsection 330]{HLP} and the
references therein), these inequlities were
introduced in dimension $n\geq 2$ by J.\,Ne\v cas.
The main point of reflection for our studies and results
is his theorem from \cite{necas} (see also A.\,Kufner \cite{kuf} for
this and related results):

\begin{thm}[Ne\v cas 1962]\label{thm: necas}
Let $\Omega\sub\R^n$ be a bounded Lipschitz domain
and let $1<p<\infty$. Then $\Omega$ admits
the $(p,\beta)$-Hardy inequality for all $\beta<p-1$.
\end{thm}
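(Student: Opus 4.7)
The plan is a classical localization argument reducing the weighted Hardy inequality on $\Omega$ to the one-dimensional Hardy inequality. First, using that $\Omega$ is bounded and Lipschitz, I would cover $\bdry\Omega$ by finitely many balls $B_1,\ldots,B_N$ in each of which, after a rotation and translation, $\Omega\cap 2B_i$ coincides with the epigraph $\{x_n>f_i(x')\}$ of a Lipschitz function $f_i\colon\R^{n-1}\to\R$. Together with one interior piece $U_0$ at positive distance from $\bdry\Omega$, these charts cover $\ol\Omega$, and I would fix a smooth partition of unity $\{\varphi_i\}$ subordinate to the cover. Writing $u=\sum_i\varphi_i u$, it suffices to prove the weighted inequality on each piece. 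The interior piece $\varphi_0 u$ is trivial because $\dom$ is bounded below on its support; the essential content lies in the boundary pieces.

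For each boundary piece $\varphi_i u$ I would apply the bi-Lipschitz change of variables $\Phi_i(x',x_n)=(x',x_n-f_i(x'))$, which flattens the local boundary. The key geometric fact is that for a Lipschitz function $f$ with Lipschitz constant $L$, the vertical height $x_n-f(x')$ and the distance from $x$ to the graph are comparable with constants depending only on $L$: the trivial upper bound $\dom(x)\leq x_n-f(x')$ combined with a cone argument in the opposite direction yields $\dom(x)\geq c(L)(x_n-f(x'))$. After the change of variables the local Hardy inequality reduces to the half-space statement
\[
\int_{\R^n_+}|v(y)|^p\,y_n^{\beta-p}\,dy\leq C\int_{\R^n_+}|\nabla v(y)|^p\,y_n^{\beta}\,dy
\]
for $v\in C_0^\infty(\R^n_+)$, which I would prove by Fubini: slicing in the $y_n$-direction reduces matters to the classical one-dimensional inequality
\[
\int_0^\infty |w(t)|^p\,t^{\beta-p}\,dt\leq\left(\frac{p}{p-1-\beta}\right)^p\int_0^\infty |w'(t)|^p\,t^\beta\,dt,
\]
whose proof is integration by parts of the left-hand side against the primitive $t^{\beta-p+1}/(\beta-p+1)$ of $t^{\beta-p}$, followed by H\"older. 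The hypothesis $\beta<p-1$ enters precisely to make this primitive vanish at $t=0$ and to keep the prefactor finite.

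The hardest part, once the analytic core is in place, is geometric: establishing the comparability $\dom(x)\asymp x_n-f_i(x')$ is what allows the weight to be transported between the graph coordinate and the true distance function, and it is exactly here that the Lipschitz hypothesis is used. The remaining technicalities — summing the pieces via the partition of unity and controlling the extra terms $|u|^p|\nabla\varphi_i|^p$ in $|\nabla(\varphi_i u)|^p$ by using that $|\nabla\varphi_i|$ is supported where $\dom$ is bounded away from $0$, and absorbing them through the Friedrichs inequality on the bounded domain $\Omega$ — are standard but tedious. The stronger point of the paper, as announced in the abstract, is that the Lipschitz assumption can be weakened considerably: only a uniform $(n-1)$-density condition on $\bdry\Omega$ is really needed to run an argument of this flavour.
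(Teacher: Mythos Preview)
Your proposal is correct---it is essentially Ne\v cas's original localization-and-flattening argument---but it is genuinely different from what the paper does. The paper does not prove Theorem~\ref{thm: necas} directly at all; it is derived as the special case $\lambda=n-1$ of Theorem~\ref{thm: main}, using only that a Lipschitz boundary satisfies the density condition~\eqref{eq: n-1 estimate}. The proof of Theorem~\ref{thm: main} (via Theorem~\ref{thm: main mms}) proceeds in two steps that have nothing to do with flattening or the one-dimensional inequality: for $\beta\le 0$ one obtains a \emph{pointwise} Hardy inequality from the Hausdorff-content estimate of Lemma~\ref{lemma: key estimate} combined with a telescoping/maximal-function argument (Theorem~\ref{thm: pw for beta<0}); for $0<\beta<p-1$ one first applies the unweighted $(p-\beta)$-Hardy inequality to $v=|u|^{p/(p-\beta)}$ and then recovers the weighted $(p,\beta)$-inequality via H\"older, as in~\eqref{eq: hardy for v}. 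Your route is more elementary and even yields the sharp constant in the half-space, but it leans on the graph structure in an essential way (the comparability $\dom(x)\asymp x_n-f(x')$); the paper's route uses only the $(n-1)$-density of $\bdry\Omega$, which is why it extends to non-Lipschitz domains and to metric spaces. One small caveat: your claim that $|\nabla\varphi_i|$ is supported where $\dom$ is bounded away from zero is not automatic for a partition of unity subordinate to boundary balls that overlap along $\bdry\Omega$; this absorption step needs either a more careful collar-type construction of the $\varphi_i$ or the observation that $\int_\Omega|u|^p\dom^{\beta}\le(\diam\Omega)^p\int_\Omega|u|^p\dom^{\beta-p}$ together with a smallness argument on the charts.
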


Recall that a domain (an open and connected set) is said to be a Lipschitz domain
if the boundary $\bdry\Omega$ can be represented locally as 
graphs of Lipschitz continuous functions. It follows
from this definition that the boundary of a Lipschitz domain $\Omega\sub\R^n$
is both `smooth' and `thick', the latter
for instance in the sense that
\begin{equation}\label{eq: n-1 estimate}
 \Ha_\infty^{n-1}\big(\bdry\Omega\cap B(x,2\dom(x))\big)\geq C_0 \dom(x)^{n-1}
\end{equation}
for all $x\in\Omega$, where $\Ha_\infty^{\lambda}$ denotes the
$\lambda$-dimensional Hausdorff content. 
Our main result is the following far-reaching generalization of
Theorem~\ref{thm: necas}. 
\begin{thm}\label{thm: main}
Let $\Omega\sub\R^n$ be an open set and let $1<p<\infty$.
Suppose that there exist
an exponent $0\leq\lambda\leq n-1$ and a constant $C_0>0$ such that
\begin{equation}\label{eq: euc main estimate}
 \Ha_\infty^\lambda\big(\bdry\Omega\cap B(x,2\dom(x))\big)\geq C_0 \dom(x)^\lambda
\end{equation}
for all $x\in\Omega$.
Then $\Omega$ admits the $(p,\beta)$-Hardy inequality for all
$\beta<p-n+\lambda$.
\end{thm}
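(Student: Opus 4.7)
The plan is to exploit the Hausdorff content density hypothesis through a Frostman measure on the boundary, then derive a pointwise averaged gradient estimate for $u$, and finally integrate against the weight $\dom(x)^{\beta-p}$ and apply Fubini to obtain the integral Hardy inequality. First, by Frostman's lemma applied to $E_x := \bdry\Omega \cap B(x, 2\dom(x))$, the hypothesis \eqref{eq: euc main estimate} produces, for each $x \in \Omega$, a Borel measure $\mu_x$ supported on $E_x$ with $\mu_x(E_x) \geq c\,\dom(x)^\lambda$ and $\mu_x(B(y, r)) \leq r^\lambda$ for all $y \in \R^n$ and $r > 0$.

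Given $u \in C_0^\infty(\Omega)$, extended by zero outside $\Omega$, for each $w \in E_x$ the vanishing $u(w) = 0$ gives $|u(x)| \leq \int_{[x,w]} |\nabla u|\, d\Ha^1$. Averaging against the probability measure $\mu_x/\mu_x(E_x)$, applying Jensen's inequality, and then Jensen again along each segment $[x, w]$ of length at most $2\dom(x)$, one obtains the pointwise averaged bound
\[
|u(x)|^p \lesssim \dom(x)^{p-1-\lambda} \int_{E_x} \int_{[x,w]} |\nabla u|^p\, d\Ha^1\, d\mu_x(w).
\]

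Multiplying by $\dom(x)^{\beta-p}$ and integrating over $\Omega$ produces a triple integral over $(x, w, y)$ with $w \in E_x$ and $y \in [x, w]$. Fubini is then used to place $|\nabla u(y)|^p$ outside, and the remaining task is to establish, for each fixed $y \in \Omega$,
\[
\int_{\{(x,w):\, y \in [x,w],\, w \in E_x\}} \dom(x)^{\beta-1-\lambda}\, d\mu_x(w)\, dx \lesssim \dom(y)^\beta.
\]
This is handled by a dyadic decomposition of the $(x,w)$-region according to the scale $|x - y|$, with each dyadic contribution controlled by the Frostman bound $\mu_x(B(\cdot, r)) \leq r^\lambda$, and the resulting geometric series converges precisely when $\beta < p - n + \lambda$.

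The principal obstacle is the Fubini step: since $\mu_x$ depends on $x$ and is typically singular, the exchange of integration order and the subsequent dyadic estimate require careful geometric bookkeeping (in particular, controlling configurations in which $\dom(x)$ and $\dom(y)$ are very different). An essentially equivalent route is to first derive a pointwise fractional Hardy inequality of the form $|u(x)|^q \lesssim \dom(x)^q\, M_{q,\mathrm{loc}} |\nabla u|^q(x)$ for some $q > n - \lambda$, and then invoke boundedness of the local maximal function on the weighted space $L^{p/q}(\dom^\beta)$; the constraint on $\beta$ then appears as a Muckenhoupt $A_{p/q}$-condition for the weight $\dom^\beta$ relative to the $\lambda$-thick boundary, which reads $\beta < (n-\lambda)(p/q - 1)$ and in the limit $q \to (n-\lambda)^+$ gives the sharp range $\beta < p - n + \lambda$. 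Either approach produces the desired bound, and the fact that the whole argument uses only distances, balls, and Frostman measures is what will make it adaptable to the metric setting mentioned in the abstract.
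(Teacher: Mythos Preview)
The Fubini step, as you have written it, collapses to zero. For fixed $y\in\Omega$ and fixed $x\neq y$, the set $\{w\in E_x:\ y\in[x,w]\}$ lies on the single ray from $x$ through $y$; since the Frostman bound $\mu_x(B(\cdot,r))\le r^\lambda$ rules out atoms whenever $\lambda>0$, this set has $\mu_x$-measure zero, so your displayed $(x,w)$-integral vanishes identically and cannot control $\dom(y)^\beta$. The underlying problem is a dimension mismatch: the $y$-integration is one-dimensional (arc length along segments), not Lebesgue $dy$, so ``placing $|\nabla u(y)|^p$ outside'' requires a coarea-type change of variables with a Jacobian, not a bare Fubini. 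Worse, when $\lambda<n-1$ the cone $\bigcup_{w\in E_x}[x,w]$ has Hausdorff dimension at most $\lambda+1<n$, and one checks that the inner double integral is not dominated by any Riesz potential $\int|\nabla u|^p\,|x-y|^{-\alpha}\,dy$, so the defect is not merely a missing factor. (Frostman measures do appear in the predecessor paper~\cite{kole}, but coupled with a Whitney decomposition rather than a direct Fubini; the present paper explicitly drops that machinery.) Your alternative maximal-function route is likewise only a sketch: the asserted $A_{p/q}$ property of $\dom^\beta$ is itself a delicate statement about the boundary geometry, not an off-the-shelf fact. And the closing claim about metric spaces fails for a concrete reason: segments $[x,w]$ and line integrals along them have no substitute in a general doubling Poincar\'e space.

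The paper's argument is structurally different and avoids both Frostman measures and line integrals. For $\beta\le 0$ it proves the stronger \emph{pointwise} $(p,\beta)$-Hardy inequality: a telescoping chain of balls from $x$ toward each $w\in\bdry\Omega\cap 2B(x)$, together with the $(1,q)$-Poincar\'e inequality, forces some ball $B_w$ along the chain to carry large weighted $g_u$-energy relative to its radius; a $5r$-covering of the boundary piece by such balls then bounds $\Hh^t_R\big(\bdry\Omega\cap 2B(x)\big)$ by a local weighted energy (Lemma~\ref{lemma: key estimate}(a)), and the density hypothesis turns this into the maximal-function bound~\eqref{eq: m w pw Hardy}. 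For $0<\beta<p-n+\lambda$ the paper does not argue directly at all: it applies the already-proven unweighted $(p-\beta)$-Hardy inequality (note $p-\beta>n-\lambda\ge 1$) to $v=|u|^{p/(p-\beta)}$, and a single use of H\"older's inequality converts this into the weighted $(p,\beta)$-Hardy for $u$. This two-step split---pointwise for $\beta\le 0$, bootstrap for $\beta>0$---is what makes the proof short and genuinely portable to metric spaces.
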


We note that Theorem~\ref{thm: main} was partly conjectured in \cite{kole}.
By \eqref{eq: n-1 estimate}, Theorem~\ref{thm: necas} follows from
Theorem~\ref{thm: main} by taking $\lambda=n-1$. 
We conclude that the smoothness of
a Lipschitz boundary plays no role here, as the thickness alone suffices
for Hardy inequalities. 
Another important and interesting consequence of Theorem~\ref{thm: main}
is that each simply connected domain in $\R^2$ admits the
$(p,\beta)$-Hardy (at least) for all $\beta<p-1$.

The bound $\beta<p-n+\lambda$ in Theorem~\ref{thm: main} is optimal.
In fact, it was shown in~\cite{lerest} (following the unweighted considerations
from~\cite{KZ}) that if $\Omega$ has an isolated boundary part of 
(Hausdorff) dimension $\lambda$, then it is not possible for $\Omega$ to
admit the $(p,p-n+\lambda)$-Hardy inequality, although 
the $(p,\beta)$-Hardy inequality might still hold for some larger $\beta$; see~\cite{lerest}.
Also the bound $\lambda\leq n-1$ (whence $\beta<p-1$) is essential,
as examples from \cite{kole} show.

Conditions of the type~\eqref{eq: euc main estimate} are referred to as 
`inner boundary density conditions'. By \cite[Thm 1]{lepw},
such conditions are actually equivalent to similar density conditions
for the complement $\Omega^c$. In particular, \eqref{eq: euc main estimate}
holds for all $x\in\Omega$ with an exponent $\lambda>n-q$
if and only if $\Omega^c$ is \emph{uniformly $q$-fat} (see e.g.~\cite{lepw}
for the definition). It follows that we  
can rewrite Theorem~\ref{thm: main} in the spirit of the unweighted
results by
A.\,Ancona~\cite{ancona} (the case $p=2$) and J.\,Lewis~\cite{lewis}, 
and generalize the weighted inequalities of A.\,Wannebo~\cite{W},
as follows:

\begin{coro}\label{coro: fatti}
Let $\Omega\sub\R^n$ be an open set and assume
that $\R^n\setminus\Omega$ is uniformly $q$-fat for all $q>s\geq 1$. 
Then $\Omega$ admits the
$(p,\beta)$-Hardy inequality whenever $1<p<\infty$ and $\beta < p-s$.
\end{coro}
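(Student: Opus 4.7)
The plan is to deduce Corollary~\ref{coro: fatti} from Theorem~\ref{thm: main} via the equivalence recalled just above the corollary statement (from \cite[Thm 1]{lepw}): the complement $\R^n\setminus\Omega$ is uniformly $q$-fat if and only if the inner boundary density estimate \eqref{eq: euc main estimate} holds on $\Omega$ with some exponent $\lambda>n-q$. The task therefore reduces to choosing the intermediate parameter $q$ correctly so as to feed the right $\lambda$ into Theorem~\ref{thm: main}.

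Fix $1<p<\infty$ and $\beta<p-s$. Since $s<p-\beta$, I would pick an auxiliary $q$ with $s<q<p-\beta$. The hypothesis of the corollary then gives that $\R^n\setminus\Omega$ is uniformly $q$-fat, and the cited equivalence produces some $\lambda>n-q$ satisfying \eqref{eq: euc main estimate}. Since $q>s\geq 1$, we have $n-q<n-1$, and a standard monotonicity argument for Hausdorff content (if the density bound holds for one exponent, it holds for every smaller one, with a slightly worse constant) allows us to replace $\lambda$ by any value in $(n-q,n-1]$; in particular we may assume $n-q<\lambda\leq n-1$. Combining $\lambda>n-q$ with $q<p-\beta$ gives $\beta<p-n+\lambda$, so Theorem~\ref{thm: main} applies with this $\lambda$ and yields the desired $(p,\beta)$-Hardy inequality.

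The corollary is thus essentially a translation of Theorem~\ref{thm: main} into the language of uniform fatness, with no genuine analytic content beyond the main theorem itself. The only real care needed is the bookkeeping that ensures $\lambda$ lies in the admissible range $[0,n-1]$ of Theorem~\ref{thm: main}; this is exactly what the auxiliary assumption $s\geq 1$ (forcing $q>1$ and hence $n-q<n-1$) encodes.
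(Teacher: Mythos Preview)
Your proposal is correct and matches the paper's own (implicit) argument: the paper gives no separate proof of the corollary but simply presents it as a rewriting of Theorem~\ref{thm: main} via the equivalence from \cite[Thm~1]{lepw}. Your bookkeeping with the auxiliary $q\in(s,p-\beta)$ and the monotonicity step forcing $\lambda\le n-1$ are exactly the details the paper leaves to the reader.
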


To be precise, Wannebo proved in~\cite{W} that uniform $p$-fatness of the complement, for $1<p<\infty$, 
suffices for $(p,\beta)$-Hardy inequalities for all $\beta<\beta_0$, where
$\beta_0$ is some small positive number. Hence, the main novelties in 
Corollary~\ref{coro: fatti} are
that we get an explicit and sharp bound for such an $\beta_0$ and that we can also
deal with the cases when $p\leq s$, where $s$ is the `optimal' fatness
of the complement; of course, in such cases we must have $\beta < 0$.

Our proof of Theorem~\ref{thm: main} is based on rather standard 
`geometric' ideas
and methods, which actually work in the much more general
setting of a metric measure space, provided that the space satisfies some structural conditions;
see Section~\ref{sect: mms} for this general framework. 
In the case $\beta\leq 0$, Theorem~\ref{thm: main} is 
an immediate consequence of
a stronger (and for $\beta<0$ previously unknown) result concerning 
the so-called pointwise Hardy inequalities;
see Section~\ref{sect: hardy} for the definition and Theorem~\ref{thm: pw for beta<0}
for the result. In fact, Theorem~\ref{thm: pw for beta<0} together with \cite[Thm 3.1]{lene}
shows that, for $\beta\leq 0$, condition~\eqref{eq: euc main estimate} for some $\lambda>n-p+\beta$
is actually both necessary and sufficient for a domain $\Omega\sub\R^n$ to admit a
pointwise version of the $(p,\beta)$-Hardy inequality (Corollary~\ref{coro: char}).

On the other hand, for $\beta>0$ density condition~\eqref{eq: euc main estimate}
with an exponent $n-p+\beta < \lambda\leq n$
is still necessary for pointwise inequalities by~\cite{lene}, but
not anymore sufficient even for the usual Hardy inequality~\eqref{eq: Hardy}, as was shown by
examples in~\cite{kole}. Nevertheless, if $0<\beta<p-n+\lambda\leq p-1$
and \eqref{eq: euc main estimate} holds, 
then the $(p,\beta)$-Hardy inequality can be obtained from the unweighted
$(p-\beta)$-Hardy inequality by an integration trick, and the
theorem follows; see Section~\ref{sect: proofs}
for the details. 

In \cite{kole} it was actually shown that~\eqref{eq: euc main estimate} together with an additional
accessibility condition suffices for a pointwise $(p,\beta)$-Hardy inequality
for all $\beta<p-n+\lambda$. By our Theorem~\ref{thm: pw for beta<0},
such an accessibility condition can now be dropped altogether if $\beta\leq 0$,
and, by Theorem~\ref{thm: main}, for $0<\beta<p-1$ if we are only interested in the validity of
the integral Hardy inequality \eqref{eq: Hardy}.
The main theorem of \cite{kole} concerning 
pointwise inequalities for $\beta>0$, with accessibility, is generalized to metric spaces in 
Theorem~\ref{thm: pw for beta>0} with a simplified proof.
As the examples from~\cite{kole} did show that for $\beta\geq p-1$ 
density condition~\eqref{eq: euc main estimate} alone is not sufficient
for the pointwise $(p,\beta)$-Hardy inequality, we conclude that the 
only piece that is still missing from the complete picture 
is whether the assumptions of Theorem~\ref{thm: main} 
always (that is, also for $0<\beta<p-n+\lambda\leq p-1$) suffice for a
pointwise version of the $(p,\beta)$-Hardy inequality.
We conjecture that this is the case, and mention, for the record, that
this question was really the essence of the (now proven)
Conjecture 1.6 of~\cite{kole}.

The organization of this paper is as follows: We begin in Section~\ref{sect: preli} with basic
definitions and other preliminaries on metric spaces, Hausdorff contents, and Hardy inequalities.
Section~\ref{sect: lemma} is devoted to the statement and proof of our main lemma,
which is then used in Section~\ref{sect: proofs} to prove the results on Hardy inequalities. 
For notation we remark that throughout the paper the letter $C$ is used to denote 
positive constants whose value may change from expression to expression.

\section{Preliminaries}\label{sect: preli}

\subsection{Metric spaces}\label{sect: mms}
We assume that $X=(X,d,\mu)$ is a complete metric measure space
equipped with a metric $d$ and a Borel regular outer measure $\mu$
such that $0<\mu(B)<\infty$ for all balls $B=B(x,r)=\{y\in X:d(x,y)<r\}$.
For $0<t<\infty$, we write $tB=B(x,tr)$, and $\ol B$ is the corresponding
closed ball.
When $A\subset X$,
$\bdry A$ is the boundary and $\ol A$ the closure 
of $A$. 
The distance from $x\in X$ to $A\sub X$ 
is denoted $d(x,A)$. 
When $\Omega\sub  X$ is an open set and $x\in\Omega$, we also denote
$\dom(x)=d(x,\bdry\Omega)$. 

We assume that the measure $\mu$ is {\it doubling}, i.e.\ that there
exists a constant $C_d\ge1$ 
such that 
\[
\mu(2B)\le C_d\,\mu(B)
\]
for all balls $B$ of $X$. The doubling condition together
with the completeness implies that the space $X$ is proper, that is, closed
balls of $X$ are compact. 

The doubling condition gives an upper bound for the dimension of $X$ in the sense
that there is a constant $C=C(C_d)>0$ such that, for $s=\log_2 C_d$,
\begin{equation}\label{doubling dimension}
\frac{\mu(B(y,r))}{\mu(B(x,R))}\ge C\Bigl(\frac rR\Bigr)^s
\end{equation}
whenever $0<r\le R<\diam X$ and $y\in B(x,R)$. The
infimum of the exponents $s$ for which \eqref{doubling dimension} 
holds is called {\it the doubling dimension} of $X$.

Another crucial assumption is 
that the space $X$ supports a {\it (weak) $(1,p)$-Poincar\'e
inequality}. More precisely, we assume that there exist constants $C>0$ and
$\tau\ge1$ such that for all balls $B\sub X$, all continuous
functions $u$, and for all \emph{upper gradients} $g_u$ of $u$, we have the inequality
\begin{equation}\label{poinc}
\vint{B}|u-u_B|\, d\mu
\le C r\Bigl(\;\vint{\tau B}g_u^p\,d\mu\Bigr)^{1/p},
\end{equation}
where 
\[
 u_B=\vint{B}u\,d\mu={\mu(B)}^{-1}\int_Bu\,d\mu
\]
is the integral average of $u$ over $B$.
Recall that a Borel function $g\ge0$ is said to be an upper 
gradient of a function $u$ (on an open set $\Omega\subset X$), 
if for all curves
$\gamma$ joining points $x$ and $y$ (in $\Omega$) we have
\begin{equation}\label{ug}
|u(x)-u(y)|\le\int_{\gamma}g\,ds
\end{equation}
whenever both $u(x)$ and $u(y)$ are finite, and
$\int_{\gamma}g\,ds=\infty$ otherwise. 
By a curve we simply mean a nonconstant, rectifiable, continuous
mapping from a compact interval to $X$. 

Examples of metric spaces satisfying the above
conditions include (weigh\-ted) Euclidean spaces, compact Riemannian manifolds, 
Carnot groups, and metric graphs.
See for instance
\cite{Hj2}, \cite{HjK}, \cite{HEI}, and the references therein for 
more information on analysis on metric spaces based on upper gradients 
and Poincar\'e inequalities.

For the rest of the paper we explicitly assume that in the context of
the $(p,\beta)$-Hardy inequality the space $X$ supports
a $(1,p)$-Poincar\'e inequality. However, in our proofs we sometimes need
to use a $(1,q)$-Poincar\'e inequality for an exponent $q<p$, but
this is justified by the self-improvement property of
Poincar\'e inequalities, see \cite{KeZ}.

Recall that a function $u\colon \Omega\to \R$ is said to be ($L$-)Lipschitz, if
\[
|u(x)-u(y)|\leq L d(x,y)\qquad \text{ for all } x,y\in \Omega.
\]
The set of all Lipschitz functions $u\colon \Omega\to\R$
is denoted $\Lip(\Omega)$, 
and $\Lip_0(\Omega)$ is
the set of Lipschitz functions $u\in\Lip(\Omega)$ with a compact support in $\Omega$.
Recall that the support of a function $u\colon \Omega\to \R$ 
is the closure of the set where $u$ is non-zero.
It is straight-forward to check that the pointwise Lipschitz constant
\[
\Lip(u;x)=\limsup_{y\to x} \frac{|u(x)-u(y)|}{d(x,y)}
\]
defines an upper gradient $g$ for a Lipschitz function
$u\colon \Omega\to \R$ by $g(x)=\Lip(u;x)$.

\subsection{Hausdorff contents}

We measure the thickness of sets $E\sub X$ 
by means of \emph{Hausdorff contents}.
The usual \emph{$\lambda$-Hausdorff content} 
of a set $A \sub X$ is defined by
\[
\Ha^\lambda_\infty(A)=\inf\bigg\{\sum_{k=1}^{\infty}r_k^\lambda :
 A\sub\bigcup_{k=1}^\infty B(x_k,r_k),\ x_k\in A \bigg\},
\]
and the \emph{Hausdorff dimension} of $A$ is then
\[
\dim(A)=\inf\{\lambda>0 : \Ha^\lambda_\infty(A)=0\}.
\]
However, in general metric spaces it is often more convenient to
use a modified version of $\Ha^\lambda_\infty$, namely
the {\it Hausdorff content of codimension $t$}, 
which is given for a set $E\subset X$ by  
\[
\Hh^t_R(E)=\inf\bigg\{\sum_{i\in I} \mu(B(x_i,r_i))\,r_i^{-t} :
E\subset\bigcup_{i\in I} B(x_i,r_i),\ r_i\leq R \bigg\}.
\]
Here we may again assume that $x_i\in E$, as this increases $\Hh^t_R(E)$
at most by a constant factor.

A metric space $X$ is said to be (Ahlfors) $Q$-reqular
if there are constants $c_1,c_2>0$ such that
\[
c_1r^Q\le\mu(B(x,r))\le c_2r^Q
\]
for all balls $B(x,r)$ in $X$.
It is easy to see that in a $Q$-regular space $X$ the content $\Hh^t_\infty(E)$ is comparable
with the usual Hausdorff content $\Ha^{Q-t}_{\infty}(E)$ for every $E\sub X$.

\subsection{Chain condition}\label{sect: chains}

We introduce an important chain condition following \cite{HjK1}; see also \cite{HEI}.
Let $x\in\Omega\sub X$, where $\Omega$ is an open set, and take $\lambda,M\geq 1$ and $a>1$. 
We say that $w\in\ol\Omega$ is connected
to $x$ by a $(\lambda,M,a)$-chain in $\Omega$, denoted $w\in\Ch_\Omega(\lambda,M,a;x)$,
if there exists a sequence of balls $B_k=B(x_k,r_k)$, $k=0,1,2,\dots$, 
so that $x_0=x$ and $x_k\to w$ as $k\to\infty$, and
the following conditions hold for each $k=0,1,2,\dots$:
\begin{enumerate}
 \item[(i)] $\lambda B_k\sub\Omega$\,;
 \item[(ii)] $M^{-1} \dom(x) a^{-k}\leq r_k\leq M \dom(x) a^{-k}$\,;
 \item[(iii)] there is a ball $B_k'$ so that $B_k'\sub B_k\cap B_{k+1}\sub MB_k'$\,.
\end{enumerate}

For instance, if $\Omega\sub X$ is a $C_J$-John domain with center point $x$, and $\lambda\geq 1$, then
there exists $M\geq 1$, depending on $\lambda$, $C_J$, and the doubling constant such that
$w\in\Ch_\Omega(\lambda,M,2;x)$ for each $w\in\ol\Omega$ (see \cite[Thm 9.3]{HjK}).
We mention that in $\R^n$ the sets $\Ch_\Omega(\lambda,M,2;x)\cap\bdry\Omega$
agree with suitable $c$-visual boundaries near $x$, defined in \cite{kole},
where $c$, $\lambda$, and $M$ only depend on each other and $n$,
and we may actually choose $\lambda$ to be as large as we want.

\subsection{Hardy inequalities}\label{sect: hardy}

In the setting of a general metric space the $(p,\beta)$-Hardy inequality takes the following form:
\begin{equation}\label{eq: m-hardy}
\int_{\Omega} |u|^p\, \dom^{\beta-p}\,d\mu
   \leq C\int_{\Omega} g_u^p\, \dom^\beta \, d\mu,
\end{equation}
where $g_u$ is an upper gradient of $u$. We say that
an open set $\Omega\sub X$ admits the (metric) $(p,\beta)$-Hardy inequality if
there exists a constant $C>0$ so that
\eqref{eq: m-hardy} holds for every $u\in\Lip_0(\Omega)$
and for all upper gradients $g_u$ of $u$.

Following the unweighted considerations by Haj\l asz \cite{haj} and 
Kinnunen and Martio \cite{KM}, 
a pointwise version of the weighted $(p,\beta)$-Hardy inequality \eqref{eq: Hardy}
was introduced in \cite{kole}. 
The metric space version reads as follows:
\begin{equation}\label{eq: m w pw Hardy}
|u(x)| \leq C \dom(x)^{1-\frac \beta p} 
     \Big(M_{L\dom(x)}\big(g_u^q{\dom}^{\frac \beta p\,q}\big)(x)\Big)^{1/q}, 
\end{equation}
where $1<q<p$, $L\geq 1$,
and $M_{R}$ 
is the usual restricted Hardy--Littlewood maximal operator,
defined by
\[
M_R f(x)=\sup_{0<r\leq R}\vint{B(x,r)} |f(y)|\,d\mu
\]
for $f\in L^1_{\loc}(X)$.
We say that an open set $\Omega\sub X$ admits the 
pointwise $(p,\beta)$-Hardy inequality if there 
exist some $1<q<p$ and constants $C>0$, $L\geq 1$ so
that the inequality \eqref{eq: m w pw Hardy} holds for all
$u\in\Lip_0(\Omega)$ with these $q$, $C$, and $L$.

\begin{remark}
Using the maximal theorem, it is easy to see that if the pointwise inequality
\eqref{eq: m w pw Hardy} holds for a function $u$ at (almost) every $x\in \Omega$,
then the usual $(p,\beta)$-Hardy inequality holds for $u$ with a constant
only depending on $p$ and the constants from the pointwise inequality and the maximal function
inequality (cf.\ \cite{kole}).
\end{remark}

\section{Main Lemma}\label{sect: lemma}

The proofs of
our main results are based on the following local
estimates for Lipschitz functions vanishing at the boundary. For weight exponents
$\beta\leq 0$ the estimate involves the whole boundary near a point $x\in\Omega$,
whereas for $\beta>0$ we need to restrict to the part of the boundary that
we can connect to $x\in\Omega$ with good chains of balls (cf.\ Section \ref{sect: chains}). 

\begin{lemma}\label{lemma: key estimate}
Let $1<p<\infty$ and $\beta<p$,
let $\Omega\sub X$ be an open set,
and take $x\in\Omega$. Denote $B(x)=\ol B(x,\dom(x))$,
and define
\begin{enumerate}
\item[{(a)}] $E=\bdry\Omega\cap 2B(x)$ if $\beta\leq 0$, 
\item[{(b)}] $E=\Ch_\Omega(\lambda,M,a;x)\cap\bdry\Omega$ if $0<\beta<p$, where $\lambda\geq 2\tau$
($\tau$ is from inequality~\eqref{poinc}). 
\end{enumerate}
Then, for each $0\leq t < p-\beta$, 
there exist an exponent $1<q<p$ and constants $C>0$, $L\geq 1$,
all independent of $x$, such that the estimate 
\begin{equation}\label{eq: bound on Ha}
\begin{split}
 \Hh_{\dom(x)}^t (E)\big|u_{(2\tau)^{-1} B(x)}\big|^{q} \leq 
     C  \dom(x)^{q-\beta\frac q p-t} \int_{LB(x)}
       g_u(y)^q\, \dom(y)^{\beta\frac q p}\,d\mu
\end{split}
\end{equation}
holds for every $u\in\Lip_0(\Omega)$.
\end{lemma}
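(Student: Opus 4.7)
The plan is to combine a telescoping Poincar\'e argument along chains of balls, which bounds $|u_{B_0}|$ with $B_0=(2\tau)^{-1}B(x)$ by a weighted series of local averages of $g_u$, with a stopping-time and covering argument on $E$ to produce the Hausdorff-content factor on the left. First I would fix an exponent $1<q<p$ for which the weak $(1,q)$-Poincar\'e inequality holds, using the Keith--Zhong self-improvement. For each $w\in E$ I would construct a chain of balls $\{B_{w,k}=B(x_{w,k},r_{w,k})\}_{k\ge 0}$ with $B_{w,0}$ comparable to $B_0$, $x_{w,k}\to w$, $r_{w,k}\sim a^{-k}\dom(x)$, and consecutive balls overlapping in a controlled way. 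In case (b) the chain is supplied by $w\in\Ch_\Omega(\lambda,M,a;x)$, and the hypothesis $\lambda\ge 2\tau$ together with (i) gives $\tau B_{w,k}\sub\Omega$ and $\dom\sim r_{w,k}$ on $\tau B_{w,k}$. In case (a), $u$ extends by zero to a Lipschitz function on $X$ with $u(w)=0$ for $w\in\bdry\Omega$, so chains in $X$ (rather than in $\Omega$) suffice; they can be built using only doubling and the position of $w\in 2B(x)$.

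Since $u_{B_{w,k}}\to u(w)=0$ by continuity, the $(1,q)$-Poincar\'e inequality and telescoping yield
\[
|u_{B_0}|\le C\sum_{k\ge 0}r_{w,k}\Bigl(\vint{\tau B_{w,k}}g_u^q\,d\mu\Bigr)^{1/q}.
\]
To introduce the weight $\dom^{\beta q/p}$, I would use that in case (b) we have $\dom(y)\sim r_{w,k}\sim a^{-k}\dom(x)$ pointwise on $\tau B_{w,k}$, while in case (a) the sign $\beta\le 0$ forces $\dom(y)^{-\beta q/p}\le C\dom(x)^{-\beta q/p}$ on any bounded region around $x$. In either case, with $\alpha=1-\beta/p>0$,
\[
|u_{B_0}|\le C\dom(x)^{1-\beta/p}\sum_{k\ge 0}a^{-k\alpha}\Bigl(\vint{\tau B_{w,k}}g_u^q\dom^{\beta q/p}\,d\mu\Bigr)^{1/q},
\]
and H\"older applied to the geometric series $\sum a^{-k\alpha}$ gives
\[
|u_{B_0}|^q\le C\dom(x)^{q-\beta q/p}\sum_{k\ge 0}a^{-k\alpha}\vint{\tau B_{w,k}}g_u^q\dom^{\beta q/p}\,d\mu.
\]

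To bring in the Hausdorff-content factor I would introduce a stopping time: for each $w\in E$ let $k(w)$ be the smallest index for which the partial sum in the previous display already exceeds half of the full sum. A standard $5r$-covering argument applied to $\{B_{w,k(w)}:w\in E\}$ produces a disjoint subfamily $\{B_{w_i,k(w_i)}\}_i$ whose fivefold enlargements still cover $E$, so
\[
\Hh^t_{\dom(x)}(E)\le C\sum_i\mu(B_{w_i,k(w_i)})\,r_{w_i,k(w_i)}^{-t}.
\]
Multiplying the stopped per-$w_i$ bound on $|u_{B_0}|^q$ by $\mu(B_{w_i,k(w_i)})\,r_{w_i,k(w_i)}^{-t}$, summing over $i$, and interchanging the two sums reduces everything to controlling, at each fixed scale $k$, the quantity $\sum_{i:\,k\le k(w_i)}\mu(B_{w_i,k(w_i)})\,r_{w_i,k(w_i)}^{-t}\,\vint{\tau B_{w_i,k}}g_u^q\dom^{\beta q/p}\,d\mu$. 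The hard part will be precisely this reorganization: one has to show that the family $\{\tau B_{w_i,k}\}_i$ has bounded overlap at each scale $k$ (inherited from the disjointness of the stopping balls, doubling, and the chain structure), and that the scale-$k(w_i)$ weight $\mu(B_{w_i,k(w_i)})\,r_{w_i,k(w_i)}^{-t}$ compares correctly with $\mu(\tau B_{w_i,k})$, so that summing in $i$ collapses to an integral of $g_u^q\dom^{\beta q/p}$ over a common super-ball $LB(x)$; the geometric factor $a^{-k\alpha}$ then makes the outer sum in $k$ converge. This Whitney-style combinatorial step, where the hypotheses on $\lambda$, $a$, $\tau$, and doubling all have to fit together, is the main technical obstacle; the rest is a standard chain/Poincar\'e telescoping.
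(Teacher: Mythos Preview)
Your telescoping setup and the idea of applying a $5r$-covering to one selected ball per boundary point are exactly right, and coincide with the paper. The divergence is at the selection step, and this is where your proposal has a real gap.

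You keep the \emph{entire} telescoping sum, define a stopping time $k(w)$, and then try to control, at each fixed scale $k$, the overlap of the family $\{\tau B_{w_i,k}\}_i$ together with the mismatched weights $\mu(B_{w_i,k(w_i)})\,r_{w_i,k(w_i)}^{-t}/\mu(\tau B_{w_i,k})$. Disjointness of the stopping balls gives no bounded overlap at earlier scales (many $w_i$ clustered near each other with large $k(w_i)$ produce heavily overlapping balls at small $k$), and your choice $\alpha=1-\beta/p$ carries no information about $t$, so there is no mechanism forcing these weights to behave. You correctly flag this ``Whitney-style combinatorial step'' as the main obstacle; as written it does not close, and I do not see how to rescue it without changing the selection rule.

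The paper bypasses all of this with a one-line pigeonhole. After normalizing $|u_{(2\tau)^{-1}B}|=1$ and obtaining
\[
1\le C\sum_{k\ge 0} r_k\Bigl(\vint{\tau B_k} g_u^q\,d\mu\Bigr)^{1/q},
\]
choose $q$ with $\tfrac{p}{p-\beta}\,t<q<p$ and set $\alpha=\tfrac{1}{q}\bigl(q-\tfrac{q}{p}\beta-t\bigr)>0$. Since $\sum_k a^{-k\alpha}<\infty$, there must exist a \emph{single} index $k_w$ with
\[
r_{k_w}\Bigl(\vint{\tau B_{k_w}} g_u^q\,d\mu\Bigr)^{1/q}\ge C_1\, a^{-k_w\alpha}=C_1\,R^{-\alpha}r_{k_w}^{\alpha}.
\]
Inserting the weight (using $\dom\sim r_{k_w}$ on $\tau B_{k_w}$ in case~(b), or $\dom\le C r_{k_w}$ together with $\beta\le 0$ in case~(a)) and raising to power $q$ gives directly
\[
\mu(\tau B_{k_w})\,r_{k_w}^{-t}\le C\,R^{\,q-\beta q/p-t}\int_{\tau B_{k_w}} g_u^q\,\dom^{\beta q/p}\,d\mu,
\]
because the exponents were rigged so that $\alpha-1+\beta/p=-t/q$. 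Now the $5r$-covering is applied to these single balls; the resulting subfamily is genuinely disjoint, so summation over $i$ is just additivity of the integral over $LB$, and the Hausdorff-content bound follows in one line. No multi-scale bookkeeping is needed.

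So the missing idea is: build $t$ into the decay exponent $\alpha$ from the start, pick \emph{one} good ball per $w$ via pigeonhole, and sum only those. Your stopping-time route tries to recover $t$ after the fact through overlap combinatorics, which is both harder and, as stated, incomplete.
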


\begin{proof}
Our proof combines elements from the proofs of \cite[Lemma 5.2]{kole}
and \cite[Thm 5.9]{HeKo}.
Let $0\leq t < p-\beta$.
It is easy to check that we can choose $1<q<\infty$ so that
\[ 
\frac{p}{p-\beta}\,t < q < p. 
\] 
Moreover, we may assume that $X$ supports a $(1,q)$-Poincar\'e inequality (cf.\ \cite{KeZ}).
Also denote $\beta'=\frac q p \beta$.
Then $q/p > t/(p-\beta)$, and we have
\begin{equation}\label{eq: positive}
q-\beta'-t = \tfrac q p (p-\beta) - t > 0.
\end{equation}
Denote $R=\dom(x)$ and $B=\ol B(x,R)$.
If $u_{(2\tau)^{-1}B}=0$ the claim \eqref{eq: bound on Ha} is trivial, so we may assume that $|u_{(2\tau)^{-1}B}|>0$,
and in fact, by homogeneity, that $|u_{(2\tau)^{-1}B}|=1$.
It is also clear that we may assume $E\neq \emptyset$.

\vskip6pt\noindent
{\it Part} (b):
Let us start with the more complicated part (b). The proof of part (a)
goes along the same lines; we comment on the
differences at the end of the proof.

First notice that, by the properties of the chains (Section \ref{sect: chains}), there exists
$L_0\geq 1$, independent of $x$, such that $\Ch_\Omega(\lambda,M,a;x)\sub L_0 B$. 
Now fix $w\in \Ch_\Omega(\lambda,M,a;x)\cap\bdry\Omega$
and let $B_i=B(x_i,r_i)$ be the corresponding chain of balls.
Then
\[\begin{split}
 1\leq |u(w)-u_{(2\tau)^{-1}B}|\leq |u_{B_0}|+|u_{B_0}-u_{(2\tau)^{-1}B}|,
\end{split}
\]
and it follows from the properties of the chain and the assumption $2\tau\leq\lambda$
that $B_0\sub (2\tau)^{-1}B$.
If $|u_{B_0}|<1/2$, we infer, using the above facts and
the $(1,q)$-Poincar\'e inequality, 
that
\begin{equation}\label{eq: if u_B small}\begin{split}
 \tfrac 1 2 & \leq |u_{B_0}-u_{(2\tau)^{-1}B}|
     \leq C R \bigg(\, \vint{(1/2)B} g_u(y)^q\,d\mu\bigg)^{1/q}\\
   & \leq C R^{1-\beta/p} \bigg(\, \vint{(1/2)B} 
        g_u(y)^q \dom(y)^{\beta\frac q p} \,d\mu\bigg)^{1/q}.
\end{split}\end{equation}
As $\Hh_R^t(E)\leq C \mu(B)R^{-t}$,
and $|u_{(2\tau)^{-1}B}|=1$, the claim \eqref{eq: bound on Ha} easily follows
from the doubling condition.

We may hence assume that $1/2 \leq |u_{B_0}|=|u(w)-u_{B_0}|$
for every $w\in \Ch_\Omega(\lambda,M,a;x)\cap\bdry\Omega$.
Using the properties of the chain together with the $(1,q)$-Poincar\'e inequality
and the doubling condition,
and the assumption that the support of $u$ is compact,
we get the standard estimate (see for example \cite{HjK1}) 
\begin{equation}\label{eq: chain}
 1\leq C \sum_{k=0}^\infty r_k\, \Bigl(\;\vint{\tau B_k}g_u^q\,d\mu\Bigr)^{1/q}.
\end{equation}
From \eqref{eq: chain} it follows that there must be a constant
$C_1>0$, independent of $x$, $u$, and $w$, and at least one index $k_w\in\N$
so that
\begin{equation}\label{eq: one big}
r_{k_w}\, \Bigl(\;\vint{\tau B_{k_w}}g_u^q\,d\mu\Bigr)^{1/q} \geq 
   C_1 a^{-k_w\alpha} = C_1 R^{-\alpha} {r_{k_w}}^\alpha,
\end{equation}
where we choose $\alpha=\frac 1 q (q-\beta'-t)>0$ (by \eqref{eq: positive}).
Let us write from now on
$B_w=B(x_w,r_w)$ instead of $B_{k_w}=B\big(x_{k_w},r_{k_w}\big)$.

We assumed that $\tau B_w\sub(\lambda/2) B_w$, and as
$\lambda B_w\sub\Omega$, it follows that ${r_{w}}^\beta\leq C \dom(y)^\beta$ 
for each $y\in \tau B_{w}$. Thus
\begin{equation}\label{eq: holder out}\begin{split}
 \Bigl(\;\vint{\tau B_{w}}g_u^q\,d\mu\Bigr)^{1/q}
  \leq C r_{w}^{-\beta/p}  \mu(\tau B_w)^{-1/q} \bigg(\int_{\tau B_{w}}g_u(y)^q 
           \dom(y)^{\beta\frac q p}\,d\mu\bigg)^{1/q}.
\end{split}\end{equation}
In particular, combining \eqref{eq: one big} and \eqref{eq: holder out}
we obtain for each $w\in E$ a ball $B_w$
such that
\begin{equation}\label{eq: pre good r}
 \mu(\tau B_w)^{1/q}\, r_w^{\alpha - 1 + \beta/p }  \leq C R^\alpha
 \bigg(\int_{\tau B_w}g_u(y)^q 
           \dom(y)^{\beta'}\,d\mu\bigg)^{1/q}.
\end{equation}
But here $\alpha - 1 + \beta/p = t/q$, so by raising both sides 
of \eqref{eq: pre good r} to
power $q$ we get a useful estimate
\begin{equation}\label{eq: good r}
 \mu(\tau B_w)\,r_w^{-t}\leq C R^{q-\beta' - t}
   \int_{\tau B_w}g_u(y)^q 
           \dom(y)^{\beta'}\,d\mu.
\end{equation}

Using again the properties of the chain we see that there exists
$\tau'\geq\tau$ such that 
$\tau B_w\sub B(w,\tau'r_w)$ holds for all $w\in E$.
By the standard $5r$-covering lemma (see e.g.\ \cite{HEI}), there
exist points $w_1,w_2,\ldots\in E$
so that
if we denote $r_i=\tau' r_{w_i}$, then the balls
$\tilde B_i=B(w_i,r_i)$ are pairwise disjoint, but still
$E \sub \bigcup_{i=1}^\infty 5\tilde B_i$. 
Moreover, it is easy to find $L\geq 1$, independent of
$x$, so that $\tilde B_i\sub L B$ for all $i$; recall that $B=\ol B(x,\dom(x))$.
Estimate \eqref{eq: good r}, the doubling property,
and the pairwise disjointness of the balls $\tau B_{w_i}\sub \tilde B_i\sub LB$ 
immediately yield
\begin{equation}\label{eq: Hastimate}
\begin{split}
\Hh^t_R(E)
& \leq \sum_{i=1}^\infty \mu(5\tilde B_i)(5r_i)^{-t} \leq C \sum_{i=1}^\infty \mu\big(\tau B_{w_i}\big)r_{w_i}^{-t}\\
& \leq \sum_{i=1}^\infty C R^{q-\beta' - t}
   \int_{\tau B_{w_i}}g_u(y)^q 
           \dom(y)^{\beta'}\,d\mu\\
& \leq C R^{q-\beta' - t}
   \int_{LB}g_u(y)^q 
           \dom(y)^{\beta'}\,d\mu.
\end{split}
\end{equation}
As we assumed $|u_{(2\tau)^{-1} B}|=1$ and denoted $\beta'=\beta\frac q p$,
estimate \eqref{eq: bound on Ha} for part (b) is proven.

\vskip6pt\noindent
{\it Part}  (a):
Let us only give here a brief description of the main differences 
compared to part (b).
We begin by fixing $w\in E=\bdry\Omega\cap 2B$, then
define $r_k=2^{-k} R$, $k\in\N$, and 
denote $B_k=B(w,r_k)$. 

If $|u_{B_0}|<1/2$, we see with a calculation similar to
\eqref{eq: if u_B small} and using the inclusion
$B_0\sub 3B$ that
\begin{equation*}\label{eq: if u_B small*}\begin{split}
 1 \leq C R^{1-\beta/p} \bigg(\, \vint{3\tau B} 
        g_u(y)^q \dom(y)^{\beta\frac q p} \,d\mu\bigg)^{1/q}.
\end{split}\end{equation*}
Notice that the assumption $\beta\leq 0$ guarantees that $R^\beta\leq C \dom(y)^\beta$ 
for each $y\in 3\tau B$. The claim \eqref{eq: bound on Ha} follows.

We may hence assume that $1/2 \leq |u_{B_0}|=|u(w)-u_{B_0}|$.
But now estimate \eqref{eq: chain} follows again for balls $B_k$ by a standard
`telescoping' argument 
using the $(1,q)$-Poincar\'e inequality (cf.\  e.g.\  \cite{HEI}), 
and the rest of the proof is almost identical to part (b);
for instance, \eqref{eq: one big} holds now with $a=2$.
Notice in particular that since 
$\beta\leq 0$ and $w\in E$, it follows again that ${r_{k}}^\beta\leq \dom(y)^\beta$ 
for each $y\in B_{k}$. At the end we can use the $5r$-covering theorem 
directly to balls $\tau B_{w}$, as they are now centered at $w$,
and the desired Hausdorff content estimate follows just as in \eqref{eq: Hastimate}.
The proof is complete. 
\end{proof}

\section{Weighted and pointwise inequalities}\label{sect: proofs}

Let us begin this section by rephrasing Theorem \ref{thm: main}
in general metric spaces:

\begin{thm}\label{thm: main mms}
Let $\Omega\sub X$ be an open set and 
let $1<p<\infty$. 
Suppose that there exist
an exponent $t\geq 1$ and a constant $C_0>0$ such that
\begin{equation}\label{eq: main estimate}
 \Hh_{\dom(x)}^t\big(\bdry\Omega\cap \ol B(x,2\dom(x))\big)
    \geq C_0 \mu\big(\ol B(x,\dom(x))\big) \dom(x)^{-t}
\end{equation}
for all $x\in\Omega$.
Then $\Omega$ admits the $(p,\beta)$-Hardy inequality
for all $\beta<p-t$.
\end{thm}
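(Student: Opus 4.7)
My plan is to split the range $\beta < p - t$ into the two subcases $\beta \leq 0$ and $0 < \beta < p-t$, using Lemma~\ref{lemma: key estimate} in two rather different ways, as previewed in the introduction.

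\emph{Case 1: $\beta \leq 0$.} First I would apply Lemma~\ref{lemma: key estimate}(a) with $E = \bdry\Omega \cap 2B(x)$. The density hypothesis~\eqref{eq: main estimate} supplies the lower bound $\Hh^t_{\dom(x)}(E)\geq C_0\mu(B(x))\dom(x)^{-t}$, which together with the doubling condition lets one divide the factor $\Hh^t_{\dom(x)}(E)$ out of~\eqref{eq: bound on Ha}, leaving
\[
 |u_{(2\tau)^{-1}B(x)}|^q \leq C \dom(x)^{q - \beta q/p} \vint{LB(x)} g_u^q\, \dom^{\beta q/p}\,d\mu.
\]
A standard telescoping argument using the $(1,q)$-Poincar\'e inequality along a chain of balls shrinking to $x$ would then upgrade this to the pointwise Hardy inequality~\eqref{eq: m w pw Hardy}. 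Taking $p$-th powers, integrating, and invoking boundedness of the restricted maximal operator $M_R$ on $L^{p/q}(\mu)$ (which applies because $p/q>1$) yields~\eqref{eq: m-hardy}, as described in the remark following~\eqref{eq: m w pw Hardy}.

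\emph{Case 2: $0 < \beta < p-t$.} Here I would first note that Case~1, applied with the exponent $p' = p - \beta$ and weight $\beta' = 0$ in place of $p$ and $\beta$, already delivers the unweighted $(p - \beta)$-Hardy inequality in $\Omega$: the density hypothesis is unchanged, $0 < (p-\beta) - t$ is precisely the assumption $\beta < p - t$, and a $(1, p - \beta)$-Poincar\'e inequality is available via self-improvement. Given $u \in \Lip_0(\Omega)$, I would set $v = |u|^{p/(p-\beta)}$; then $v \in \Lip_0(\Omega)$, with upper gradient $g_v = \tfrac{p}{p-\beta}|u|^{\beta/(p-\beta)} g_u$. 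Applying the $(p-\beta)$-Hardy inequality to $v$ gives
\[
 \int_\Omega |u|^p \dom^{\beta - p}\,d\mu \leq C \int_\Omega |u|^\beta g_u^{p-\beta}\,d\mu,
\]
and H\"older's inequality with conjugate exponents $p/\beta$ and $p/(p-\beta)$ bounds the right-hand side by $C\bigl(\int_\Omega |u|^p \dom^{\beta-p}\bigr)^{\beta/p}\bigl(\int_\Omega g_u^p \dom^\beta\bigr)^{(p-\beta)/p}$. Since $u$ has compact support in $\Omega$, the first factor is finite and may be absorbed into the left, producing~\eqref{eq: m-hardy}.

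The substantive work has already been done inside Lemma~\ref{lemma: key estimate}; both reductions above are soft once that local estimate is in hand. The trickiest bookkeeping is in Case~1, where one must align $q$, $t$, and $\beta$ so that the power of $\dom(x)$ surviving the cancellation against the density condition is exactly $\dom(x)^{q(1-\beta/p)}$, matching the scaling of the pointwise Hardy inequality. I expect no further geometric input: in particular, no accessibility condition on $\bdry\Omega$ enters either case, which is the main improvement over \cite{kole}.
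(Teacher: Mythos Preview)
Your proposal is correct and follows essentially the same route as the paper: the paper likewise splits into $\beta\leq 0$ (handled via Lemma~\ref{lemma: key estimate}(a) plus the density assumption to obtain the pointwise $(p,\beta)$-Hardy inequality, which is recorded separately as Theorem~\ref{thm: pw for beta<0}) and $0<\beta<p-t$ (handled by first deducing the unweighted $(p-\beta,0)$-Hardy inequality from the previous case, then applying the substitution $v=|u|^{p/(p-\beta)}$ together with H\"older). The only point you leave slightly implicit is that $t\geq 1$ is what guarantees $p-\beta>1$, so that Case~1 is genuinely applicable with exponent $p'=p-\beta$; the paper makes this observation explicit.
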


In the case $\beta\leq 0$, Theorem~\ref{thm: main mms} is 
an immediate consequence of the following result
on pointwise inequalities. The special case $\beta=0$
of Theorem~\ref{thm: pw for beta<0} is contained in the
results of \cite{KLT}.

\begin{thm}\label{thm: pw for beta<0}
Let $\Omega\sub X$ be an open set and 
let $1<p<\infty$ and $\beta\leq 0$.
Suppose that there exist
an exponent $0\leq t < p - \beta$ and a constant $C_0>0$ such that
the density condition \eqref{eq: main estimate} holds
for all $x\in\Omega$.
Then $\Omega$ admits the pointwise $(p,\beta)$-Hardy inequality.
\end{thm}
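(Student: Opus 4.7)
The plan is to combine part (a) of the main lemma with the density assumption and a standard telescoping argument that controls $u(x) - u_{(2\tau)^{-1}B(x)}$. The hypothesis $\beta \leq 0$ will enter in two crucial places: it is needed to apply part (a) of Lemma~\ref{lemma: key estimate} (the ``non-chain'' case), and it allows us to convert an unweighted maximal function into the weighted one appearing on the right-hand side of \eqref{eq: m w pw Hardy}.

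First, I would fix any admissible exponent $t$ with $0\leq t<p-\beta$. Applying Lemma~\ref{lemma: key estimate}(a) at a fixed $x\in\Omega$ produces $1<q<p$, $L\geq 1$, and a constant $C$ (independent of $x$) so that
\[
\Hh^t_{\dom(x)}\big(\bdry\Omega\cap 2B(x)\big)\,|u_{(2\tau)^{-1}B(x)}|^q\leq C\dom(x)^{q-\beta q/p-t}\int_{LB(x)}g_u(y)^q\dom(y)^{\beta q/p}\,d\mu.
\]
Feeding in \eqref{eq: main estimate} to bound $\Hh^t_{\dom(x)}$ from below by $C_0\mu(B(x))\dom(x)^{-t}$, dividing through, and absorbing $\mu(LB(x))/\mu(B(x))$ via the doubling condition yields
\[
|u_{(2\tau)^{-1}B(x)}|^q\leq C\,\dom(x)^{q-\beta q/p}\,\vint{LB(x)}g_u(y)^q\dom(y)^{\beta q/p}\,d\mu.
\]
This is already almost the pointwise inequality \eqref{eq: m w pw Hardy}, but with the integral average over $(2\tau)^{-1}B(x)$ in place of $u(x)$.

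Next, to pass from $u_{(2\tau)^{-1}B(x)}$ to the value $u(x)$, I would use a standard telescoping chain $B(x,2^{-k}\dom(x)/(2\tau))$ together with the $(1,q)$-Poincar\'e inequality, exploiting Lebesgue's differentiation theorem (valid here since $u$ is Lipschitz) to get
\[
|u(x)-u_{(2\tau)^{-1}B(x)}|\leq C\,\dom(x)\,\bigl(M_{C\dom(x)}(g_u^q)(x)\bigr)^{1/q}.
\]
Thus $|u(x)|$ is bounded by the sum of this quantity and $|u_{(2\tau)^{-1}B(x)}|$.

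The final step, and the only place requiring the sign condition $\beta\leq 0$ explicitly in this patching, is converting the unweighted maximal function into the weighted one. For every $y\in B(x,r)$ with $r\leq C\dom(x)$, one has $\dom(y)\leq \dom(x)+r\leq C\dom(x)$, so since $-\beta q/p\geq 0$,
\[
\dom(y)^{-\beta q/p}\leq C\,\dom(x)^{-\beta q/p}.
\]
Multiplying and dividing the integrand $g_u^q$ by $\dom^{\beta q/p}$ inside the average gives
\[
M_{C\dom(x)}(g_u^q)(x)\leq C\,\dom(x)^{-\beta q/p}\,M_{C\dom(x)}\bigl(g_u^q\dom^{\beta q/p}\bigr)(x),
\]
and hence the telescoping term is likewise bounded by $C\dom(x)^{1-\beta/p}\bigl(M_{L\dom(x)}(g_u^q\dom^{\beta q/p})(x)\bigr)^{1/q}$. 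Adding the two contributions yields \eqref{eq: m w pw Hardy} and completes the proof.

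The main obstacle I anticipate is purely bookkeeping: one has to check that the same exponent $q$ (and admissible constants $C$, $L$) works uniformly for both the main-lemma estimate and the telescoping estimate, and that all averaging radii can be absorbed into a single restricted maximal operator $M_{L\dom(x)}$. The essential analytic content is already packaged in Lemma~\ref{lemma: key estimate}(a), and the sign hypothesis $\beta\leq 0$ is what makes the weight $\dom^{\beta q/p}$ behave favorably under taking maximal functions on balls of radius comparable to $\dom(x)$.
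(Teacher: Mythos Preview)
Your proposal is correct and follows essentially the same route as the paper's proof: split $|u(x)|\leq |u(x)-u_{(2\tau)^{-1}B(x)}|+|u_{(2\tau)^{-1}B(x)}|$, control the average via Lemma~\ref{lemma: key estimate}(a) combined with the density assumption~\eqref{eq: main estimate}, control the oscillation term by a telescoping chain and the $(1,q)$-Poincar\'e inequality, and finally use $\beta\leq 0$ to pass from the unweighted maximal function to the weighted one. The paper's argument is identical in structure, only more terse in the weight-conversion step that you spell out explicitly.
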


\begin{proof}
Let $u\in\Lip_0(\Omega)$, $x\in\Omega$,
and denote $R=\dom(x)$,
$B=B\big(x,(2\tau)^{-1}R\big)$. Then
\[
|u(x)|\le|u(x)-u_{B}|+|u_{B}|.
\]
Choose $1<q<p$ just as in the proof of Lemma \ref{lemma: key estimate}.
Again a standard telescoping trick 
using the $(1,q)$-Poincar\'e inequality gives 
\[\begin{split}
|u(x)-u_{B}| & \leq CR\big( M_{R/2} g_u^q(x)\big)^{1/q}\\
 & \leq CR^{1-\beta/p}\big( M_{R/2} (g_u^q \dom^{\beta q/p})(x)\big)^{1/q}.
\end{split}
\]  
On the other hand, using assumption \eqref{eq: main estimate}, 
part (a) of Lemma \ref{lemma: key estimate}, and the doubling condition, we obtain
\[\begin{split}
|u_B|^q & \leq C
\mu\big(\ol B(x,R)\big)^{-1} R^{t}
      R^{q-\beta\frac q p-t} \int_{B(x,LR)}
       g_u(y)^q\, \dom(y)^{\beta\frac q p}\,d\mu\\
 & \leq  C R^{q-\beta\frac q p} M_{LR}\big(g_u^q \dom^{\beta\frac q p}\big)(x).
\end{split}\]
The pointwise $(p,\beta)$-Hardy inequality follows
easily from the previous estimates.
\end{proof}

Conversely, we have the following necessary condition for pointwise Hardy inequalities.
The Euclidean case was proven in \cite{lene}, and the special case $\beta=0$ 
was done in \cite{KLT} in the metric space setting. We omit the proof here, as the
modifications needed to the proofs in \cite{lene} and \cite{KLT} are obvious.

\begin{prop}\label{prop: nec}
Let $\Omega\sub X$ be an open set and 
assume that $\Omega$ admits the pointwise $(p,\beta)$-Hardy inequality
\eqref{eq: m w pw Hardy}. If $\beta<0$, we assume in addition that
$X$ is $Q$-regular.
Then there exists $t<p-\beta$ such that
the inner density condition \eqref{eq: main estimate} holds
for all $x\in\Omega$.
\end{prop}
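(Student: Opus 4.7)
The plan is to proceed by contraposition, following the Euclidean blueprint of \cite{lene} and the $\beta=0$ metric version of \cite{KLT}. Fix the constants $1<q<p$, $C$, $L$ from the pointwise $(p,\beta)$-Hardy inequality, fix $x\in\Omega$, and set $R=\dom(x)$. For an arbitrary covering $\{B_i=B(y_i,r_i)\}_{i\in I}$ of $\bdry\Omega\cap\ol B(x,2R)$ by balls centered on the boundary with $r_i\leq R/100$ (larger balls already contribute enough to \eqref{eq: main estimate} to be absorbed), I would build a test function that is identically $1$ at $x$ but vanishes on a neighbourhood of the cover. Concretely, set $\phi_i(y)=\max\{0,1-\dist(y,B_i)/r_i\}$, let $\Phi=\sup_i\phi_i$, pick a Lipschitz cutoff $\eta$ with $\eta\equiv 1$ on $B(x,3R/2)$, $\spt\eta\sub B(x,7R/4)$, $\Lip(\eta)\lesssim 1/R$, and define $u=(1-\Phi)\eta\in\Lip_0(\Omega)$. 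Then $u(x)=1$, and the pointwise Lipschitz constant yields the upper gradient bound
\[
g_u\leq \tfrac{C}{R}\,\Char{B(x,7R/4)\setminus B(x,3R/2)} + \sum_{i\in I}\tfrac{C}{r_i}\,\Char{2B_i}.
\]

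Next I would insert $u$ into \eqref{eq: m w pw Hardy} at $x$ and raise to the $q$-th power to obtain
\[
1\leq C R^{q-\beta q/p}\,\frac{1}{\mu(B(x,LR))}\int_{B(x,LR)} g_u^q\,\dom^{\beta q/p}\,d\mu.
\]
The outer annulus contributes a bounded term (this is where $Q$-regularity enters in the case $\beta<0$, to keep $\int \dom^{\beta q/p}$ finite there), and the main term is
\[
\sum_{i\in I}\frac{1}{r_i^q}\int_{2B_i}\dom^{\beta q/p}\,d\mu\leq C\sum_{i\in I}\mu(B_i)\,r_i^{-q+\beta q/p}.
\]
The remaining algebraic step is to set $t=q(p-\beta)/p$, which satisfies $t<p-\beta$ and can be pushed arbitrarily close to $p-\beta$ by choosing $q$ close to $p$. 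Then $-q+\beta q/p=-t$, so the sum is exactly $\sum_i\mu(B_i)r_i^{-t}$, and taking the infimum over all admissible covers produces
\[
\mu(\ol B(x,R))\,R^{-t}\leq C\,\Hh_R^t(\bdry\Omega\cap\ol B(x,2R)),
\]
which is \eqref{eq: main estimate}.

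The step I expect to be the main obstacle is the weight estimate $\int_{2B_i}\dom^{\beta q/p}\,d\mu\leq C r_i^{\beta q/p}\mu(B_i)$. For $\beta\geq 0$ this is immediate from $\dom(y)\leq 2r_i$ on $2B_i$ together with doubling. For $\beta<0$ the integrand blows up near $\bdry\Omega$, so the estimate is false in general metric spaces; this is precisely why $Q$-regularity is assumed. With $Q$-regularity and $y_i\in\bdry\Omega$, one decomposes $2B_i$ into dyadic shells $\{\dom\sim 2^{-k}r_i\}$, bounds the measure of each shell by $Cr_i^Q$, and sums the geometric series in $k$ using that $\beta q/p>-Q$ for $q$ sufficiently close to $p$ (which we are free to arrange because $q$ has already been chosen close to $p$ to make $t$ close to $p-\beta$). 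Aside from this weighted volume bound, the proof is essentially a line-by-line transcription of the arguments in \cite{lene,KLT}, with the cutoff $u=(1-\Phi)\eta$ playing the role of the Euclidean bump used there.
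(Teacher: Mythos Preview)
The paper omits this proof entirely, deferring to \cite{lene} and \cite{KLT}, so there is no in-paper argument to compare against; your outline is indeed the strategy of those references. That said, two points in your sketch do not go through as written.

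\emph{The cut-off $\eta$.} With $\spt\eta\subset B(x,7R/4)$, the contribution of $g_\eta$ to $M_{LR}\big(g_u^q\,\dom^{\beta q/p}\big)(x)$ is of order $R^{-t}$, so after multiplying by $C R^{t}$ you obtain $1\le C_{\mathrm{ann}}+C\cdot(\text{covering sum})$ with a constant $C_{\mathrm{ann}}$ that depends on the Hardy and doubling data and is in general $\ge 1$; no lower bound on the covering sum follows. The usual remedy is to take $\eta\equiv 1$ on $B(x,(L+1)R)$ with $\spt\eta\subset B(x,(L+2)R)$: then $g_\eta$ lives outside the reach of $M_{LR}$ and contributes nothing. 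One must correspondingly cover $\bdry\Omega\cap\ol B\big(x,(L+2)R\big)$, obtaining \eqref{eq: main estimate} at the comparable scale $(L+2)R$ in place of $2R$, which is equivalent.

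\emph{The weight estimate for $\beta<0$.} Your dyadic-shell computation does not converge. With the trivial bound $\mu(\{y\in 2B_i:\dom(y)\sim 2^{-k}r_i\})\le C r_i^{\,Q}$ (which is all that $Q$-regularity gives without further input) and $\dom^{\beta q/p}\sim(2^{-k}r_i)^{\beta q/p}$ on that shell, the series $\sum_{k\ge 0}2^{-k\beta q/p}$ diverges because $\beta q/p<0$. In fact $\int_{2B_i}\dom^{\beta q/p}\,d\mu$ is \emph{not} controlled by $C r_i^{\beta q/p}\mu(B_i)$ from $Q$-regularity alone: already for the half-space in $\R^n$ the integral is infinite once $\beta q/p\le -1$, while your stated threshold $\beta q/p>-Q$ would allow this. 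What is actually needed is to integrate only over $\spt g_\Phi$, which avoids a neighbourhood of $\bdry\Omega$, and to argue that there $\dom$ is bounded below by a multiple of the local covering radius; this is where the argument in \cite{lene} (and the role of $Q$-regularity) is more delicate than your sketch suggests.
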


Combining Theorem \ref{thm: pw for beta<0} and Proposition \ref{prop: nec}
we obtain a characterization in the case $\beta < 0$, which is new even in $\R^n$;
for $\beta=0$ the corresponding result holds in metric spaces by \cite{KLT}.

\begin{coro}\label{coro: char}
Assume that $X$ is $Q$-regular. Let $\Omega\sub X$ be an open set and 
let $1<p<\infty$ and $\beta < 0$.
Then $\Omega$ admits the pointwise $(p,\beta)$-Hardy inequality \eqref{eq: m w pw Hardy}
if and only if there exist
an exponent $t < p - \beta$ and a constant $C_0>0$ such that
the inner density condition \eqref{eq: main estimate} holds
for all $x\in\Omega$.
\end{coro}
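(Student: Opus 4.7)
The plan is essentially to observe that Corollary \ref{coro: char} is a direct combination of two results already established earlier in the paper: Theorem \ref{thm: pw for beta<0} (sufficiency of the density condition) and Proposition \ref{prop: nec} (its necessity). Since the statement is an ``if and only if'' with the hypothesis $\beta<0$ (and $X$ assumed $Q$-regular), both directions are immediate once one checks that the hypotheses of each of these results are met. I will treat the two implications in turn.

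For the sufficient direction, I assume there exist $t<p-\beta$ and $C_0>0$ such that the inner density condition \eqref{eq: main estimate} holds at every $x\in\Omega$. Since $\beta<0$ implies $\beta\leq 0$, and since the exponent range $0\leq t<p-\beta$ is exactly the one required in Theorem \ref{thm: pw for beta<0}, that theorem applies and yields the pointwise $(p,\beta)$-Hardy inequality. (Note that we may assume $t\geq 0$, since the density condition only gets easier as $t$ decreases in the Hausdorff-content-of-codimension-$t$ convention; if the given $t$ is negative we simply take $t=0$.)

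For the necessary direction, I assume that $\Omega$ admits the pointwise $(p,\beta)$-Hardy inequality \eqref{eq: m w pw Hardy}. Since $\beta<0$ and $X$ is $Q$-regular by hypothesis, the extra regularity hypothesis imposed in Proposition \ref{prop: nec} is satisfied, so that proposition applies and produces an exponent $t<p-\beta$ for which the density condition \eqref{eq: main estimate} holds at every $x\in\Omega$, as required.

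Since the proof is a direct citation of the two earlier results, there is no real technical obstacle; the only point one has to be careful about is making sure the ranges of the parameters $\beta$ and $t$ match up between the three statements. In particular, the $Q$-regularity assumption in the corollary is \emph{only} used to invoke the necessity statement in Proposition \ref{prop: nec} for $\beta<0$; the sufficient direction does not need it. This also explains why the characterization cannot be stated cleanly for general $\beta\leq 0$ without additional regularity of $X$: the necessity of a strict inequality $t<p-\beta$ for pointwise Hardy, in the case $\beta<0$, is what requires the $Q$-regular setting in \cite{lene}, \cite{KLT}.
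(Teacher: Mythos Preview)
Your proposal is correct and matches the paper's own treatment exactly: the corollary is stated as an immediate combination of Theorem~\ref{thm: pw for beta<0} (sufficiency) and Proposition~\ref{prop: nec} (necessity), with no additional argument given. Your parenthetical about possibly negative $t$ is unnecessary in context but harmless.
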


Let us finally give a proof for our main result concerning weighted Hardy inequalities:

\begin{proof}[Proof of Theorem \ref{thm: main mms}]
As the pointwise $(p,\beta)$-Hardy inequality
always implies the usual $(p,\beta)$-Hardy inequality 
(see the remark at the end of Section \ref{sect: hardy}),
Theorem \ref{thm: main mms} follows from Theorem \ref{thm: pw for beta<0}
for $\beta\leq 0$.

Hence, we only need to consider the case $0<\beta<p-t$.
But now $p-\beta > t \geq 1$, and so
Theorem \ref{thm: pw for beta<0}, applied to the unweighted case, implies that $\Omega$
admits the $(p-\beta,0)$-Hardy inequality. 
The claim now follows in fact from a 
straight-forward metric space generalization of a
more general result of \cite[Lemma 2.1]{lesi}, 
but let us recall here the calculations in this special case
for the sake of completeness and in order to
emphasize the role of elementary tools behind the theorem.

To this end, let $u\in \Lip_0(\Omega)$
and let $g_u$ be an upper gradient of $u$. Now define $v=|u|^{\frac{p}{p-\beta}}$. 
As $\beta>0$, we see that
$v$ is a Lipschitz-function with a compact support in $\Omega$, and, moreover,
\[
g_v(x) = \big(\tfrac{p}{p-\beta}\big)|u(x)|^{\beta/(p-\beta)}g_u(x) 
\]
defines an upper gradient for $v$. 
As the $(p-\beta,0)$-Hardy inequality holds for
$v$, we obtain, with the help of H\"older's inequality
(observe $\frac{p-\beta}{p}+\frac\beta p=1$), that
\begin{equation}\label{eq: hardy for v}\begin{split}
 \int_\Omega  |u(x)&|^{p}  \dom(x)^{-(p-\beta)}\,d\mu
  \ = \ \int_\Omega |v(x)|^{p-\beta}  \dom(x)^{-(p-\beta)}\,d\mu\\
  &  \leq C_1 \int_\Omega g_v(x)^{p-\beta}\,d\mu
     \ = \  C_2 \int_\Omega |u(x)|^\beta g_u(x)^{p-\beta}\,d\mu\\
  &  = C_2 \int_\Omega \Big(|u(x)|^\beta\dom(x)^{\frac{\beta(\beta-p)}{p}}\Big)
       \Big(g_u(x)^{p-\beta}\dom(x)^{\frac{\beta(p-\beta)}{p}}  \Big)\,d\mu\\
  & \leq C_2  
  \bigg(\int_\Omega |u(x)|^p\dom(x)^{\beta-p}\,d\mu \bigg)^\frac{\beta}{p}
     \bigg(\int_\Omega g_u(x)^p\dom(x)^{\beta}\,d\mu\bigg)^{\frac{p-\beta}{p}}.
\end{split}
\end{equation}
From \eqref{eq: hardy for v} the $(p,\beta)$-Hardy inequality for $u$ easily follows 
by first dividing with the first integral term on the right-hand side (which we may assume to be
non-zero), and then taking both sides to power $p/(p-\beta)$. 
\end{proof}

\begin{remark}
There is an interesting observation concerning the procedure in \eqref{eq: hardy for v} 
and the best possible constants in Hardy inequalities. Namely, 
it is well-known that the best possible constant for the $p$-Hardy inequality
in a convex domain $\Omega\sub\R^n$, $n\geq 1$, is $C_1=(p/(p-1))^p$,
and in other smooth domains the constant is in general larger;
see e.g.\ \cite{HLP} ($n=1$) and \cite{MMP} ($n\geq 2$).
On the other hand, the best possible constant for the one-dimensional 
weighted $(p,\beta)$-Hardy inequality,
with $\beta<p-1$, is $(p/(p-\beta-1))^p$ (see \cite{HLP}), 
and this can be trivially generalized to, say, a ball or a half-space in $\R^n$.

Usually, our methods on Hardy inequalities lead to constants which are far from being optimal.
However, if we have in \eqref{eq: hardy for v} that
$
C_1=\big(\frac {p-\beta}{p-\beta-1}\big)^{p-\beta} 
$
(the optimal constant for the $(p-\beta)$-Hardy), we see directly from the calculation that
the constant in the corresponding $(p,\beta)$-Hardy inequality will then be
$
C=\big(\frac {p}{p-\beta-1}\big)^{p}, 
$
which is, at least in the above special cases, the best possible constant
for the $(p,\beta)$-Hardy inequality.
This raises the question whether the above procedure preserves 
the optimal constants in (weighted) Hardy inequalities even in more general cases. 
\end{remark}

We finish by recording the following result, which
coincides in the case $X=\R^n$ with the main theorem of \cite{kole}. The proof 
is identical to the proof of Theorem \ref{thm: pw for beta<0} besides that
the part (b) of Lemma \ref{lemma: key estimate} (instead of part (a)) is needed.
When $X=\R^n$,
the derivation of this result using Lemma \ref{lemma: key estimate}(b) 
is --- once all the technicalities arising from the metric space setting are 
removed --- in a theoretical sense a simplification over the proof from \cite{kole}, as here we avoid completely the
use of Whitney type coverings and Frostman's lemma.

\begin{thm}\label{thm: pw for beta>0}
Let $\Omega\sub X$ be an open set and 
let $1<p<\infty$. 
Suppose that there exist
an exponent $t \geq 0$ and constants $C_0>0$, $M\geq 1$, $\lambda\geq 2\tau$, and $a>1$ such that
\begin{equation}\label{eq: main estimate for chains}
 \Hh_{\dom(x)}^t\big(\Ch_\Omega(\lambda,M,a;x)\cap\bdry\Omega\big)
    \geq C_0 \mu\big(\ol B(x,\dom(x))\big) \dom(x)^{-t}
\end{equation}
for all $x\in\Omega$.
Then $\Omega$ admits the pointwise $(p,\beta)$-Hardy inequality
for all $\beta<p-t$, and hence also the usual $(p,\beta)$-Hardy inequality
for these $\beta$.
\end{thm}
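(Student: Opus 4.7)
The plan is to follow the template of the proof of Theorem~\ref{thm: pw for beta<0} essentially verbatim, swapping part~(b) of Lemma~\ref{lemma: key estimate} in for part~(a) whenever $\beta>0$. The case $\beta\leq 0$ is in fact already covered by Theorem~\ref{thm: pw for beta<0} itself: since $\Ch_\Omega(\lambda,M,a;x)\cap\bdry\Omega$ is contained in $L_0\ol B(x,\dom(x))$ for some absolute $L_0\geq 1$, monotonicity of $\Hh^t_{\dom(x)}$ under inclusion turns the chain density assumption~\eqref{eq: main estimate for chains} into the standard inner density~\eqref{eq: main estimate}, possibly on a larger ball, which only affects constants.

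It therefore suffices to treat $0<\beta<p-t$. I would fix $u\in\Lip_0(\Omega)$ and $x\in\Omega$, set $R=\dom(x)$ and $B=B(x,(2\tau)^{-1}R)$, and split
\[
|u(x)|\leq |u(x)-u_B|+|u_B|.
\]
For the first summand the standard $(1,q)$-Poincar\'e telescoping argument, with the exponent $q$ chosen as in Lemma~\ref{lemma: key estimate} so that $\tfrac{p}{p-\beta}t<q<p$, gives
\[
|u(x)-u_B|\leq C R^{1-\beta/p}\bigl(M_{R/2}(g_u^q\dom^{\beta q/p})(x)\bigr)^{1/q},
\]
the weight $\dom^{\beta/p}$ being inserted harmlessly via the comparability $\dom(y)\approx R$ on $B$.

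For $|u_B|$ I apply part~(b) of Lemma~\ref{lemma: key estimate} to $E=\Ch_\Omega(\lambda,M,a;x)\cap\bdry\Omega$; the hypothesis $\lambda\geq 2\tau$ needed by the lemma is exactly the one built into the theorem. Combining the lemma with the chain density assumption~\eqref{eq: main estimate for chains} and the doubling property converts the content factor on the left-hand side into a volume factor and yields
\[
|u_B|^q\leq C R^{q-\beta q/p}\,M_{LR}\bigl(g_u^q\dom^{\beta q/p}\bigr)(x).
\]
Adding the two bounds produces the pointwise $(p,\beta)$-Hardy inequality~\eqref{eq: m w pw Hardy}, and the usual integral $(p,\beta)$-Hardy inequality then follows by the maximal-function remark at the end of Section~\ref{sect: hardy}.

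I do not foresee a real obstacle: essentially all the geometric input — the chain argument and the extraction of a single good ball via~\eqref{eq: one big} — is already packaged inside Lemma~\ref{lemma: key estimate}(b), which is precisely why the metric-space proof is cleaner than the Euclidean one in~\cite{kole}. The only consistency check is that one and the same exponent $q$ can drive both the telescoping step and the lemma, and this is automatic by the self-improvement of Poincar\'e inequalities~\cite{KeZ}.
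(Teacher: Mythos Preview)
Your proposal is correct and follows exactly the route the paper indicates: the paper's proof of Theorem~\ref{thm: pw for beta>0} simply says ``identical to the proof of Theorem~\ref{thm: pw for beta<0} besides that part~(b) of Lemma~\ref{lemma: key estimate} (instead of part~(a)) is needed,'' which is precisely what you do. Your separate treatment of $\beta\leq 0$ by reducing to Theorem~\ref{thm: pw for beta<0} via the inclusion $\Ch_\Omega(\lambda,M,a;x)\subset L_0\ol B(x,\dom(x))$ is a harmless and clean way to handle that case.
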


Notice that, contrary to Theorem \ref{thm: main mms}, we do not need to
assume above that $t\geq 1$, only $t\geq 0$. Moreover, as noted in the Introduction,
examples from \cite{kole} show that some kind of an `accessibility' 
condition is needed to guarantee even the validity of the usual
integral Hardy inequalities
when $\beta\geq p-1$ (corresponding to the case $0\leq t<1$).
As a concrete example 
we mention that if $\Omega\sub\R^n$  is a uniform domain
satisfying
\eqref{eq: main estimate} for all $x\in\Omega$, then also 
\eqref{eq: main estimate for chains} holds
with suitable constants for all $x\in\Omega$;
see \cite{kole} for details. In particular, this is true in a snowflake domain,
where we can have $t<1$.

\subsection*{Acknowledgement}
The author is grateful to Professor Pekka Koskela
for helpful comments during the preparation of this work.

\end{document}